\pgfplotsset{compat=1.18}
\newlength{\tempdima}
\newcommand{\rowname}[1]
{\rotatebox{90}{\makebox[\tempdima][c]{#1}}}
\newcolumntype{L}{>{\centering\arraybackslash}m{2cm}}
\newcommand{\conv}{\mathop{\mathrm{conv}}}
\newtheorem{prop}{Proposition}
\newcommand{\iL}{\mathcal{L}}
\newcommand{\sstep}{\eta}
\renewcommand{\u}{{\mathbf{u}}}          
\newcommand{\e}{{\mathbf{e}}}            
\newcommand{\bv}{{\mathbf{v}}}           
\newcommand{\bu}{{\bm{u}}}               
\newcommand{\bw}{{\mathbf{w}}}           
\newcommand{\bb}{{\mathbf{b}}}           
\newcommand{\bd}{{\mathbf{d}}}           
\newcommand{\zero}{\mathbf{0}}           
\newcommand{\R}{\mathbb{R}}
\begin{document}

\title{Hybrid subgradient and simulated annealing method for hemivariational inequalities}

\author{Piotr~Bartman-Szwarc\inst{1,2}\orcidID{0000-0003-0265-6428} \and
Adil~M.~Bagirov\inst{3}\orcidID{0000-0003-2075-1699} \and Anna~Ochal\inst{1}\orcidID{0000-0003-4385-2302}}
\authorrunning{P. Bartman-Szwarc et al.}
%
\titlerunning{Hybrid subgradient and simulated annealing method for HVIs}
\institute{Chair of Optimization and Control, Jagiellonian University, Krakow, Poland
\and
Doctoral School of Exact and Natural Sciences, Jagiellonian University, Krakow, Poland
\and
Centre for Smart Analytics, Institute of Innovation, Science and Sustainability, Federation University Australia, Ballarat, Victoria, Australia}

\maketitle

\begin{abstract}
In this paper, we employ a global aggregate subgradient method for the numerical solution of hemivariational inequality problems arising in contact mechanics. The method integrates a global search procedure to identify starting points for a local minimization algorithm. The algorithm consists of two types of steps: null steps and serious steps. In each null step, only two subgradients are utilized: the aggregate subgradient and the subgradient computed at the current iteration point, which together determine the search direction. Furthermore, we compare the performance of the proposed method with selected solvers using a~representative contact mechanics problem as a case study.

\keywords{Hemivariational Inequalities \and Nonsmooth Optimization \and Sub\-gra\-dient Method}
\end{abstract}


\section{Introduction} \label{intro}
Hemivariational inequalities are generalizations of variational inequalities. In most cases hemivariational inequalities can be reformulated as substationary point problems of the corresponding nonsmooth nonconvex energy functions. The theory and some applications of hemivariational inequalities can be found in \cite{Panagiotopoulos1993,Panagiotopoulos1995}. To date, various methods have been developed for solving hemi\-va\-ria\-tio\-nal inequalities in different applications including in contact mechanics \cite{Han2019, Han2022, Wang21}. A~nonsmooth optimization approach to such problems is studied in \cite{Ochal2021,Makela99}.

The aim of this paper is to develop a nonsmooth nonconvex optimization method for the numerical solution of hemivariational inequalities. The proposed method is a combination of the local search subgradient method and a global search simulated annealing method. The use of the subgradient method allows to address nonsmoothness of the problem and the use of the simulated annealing method allows to deal with the nonconvexity of this problem. More specifically, we apply the subgradient method to find stationary points of the so-called energy function and then use the simulated annealing method to escape from these stationary points to find a better starting point for the local search method.

The paper is structured as follows. In Section \ref{aggsub}, first, we introduce the subgradient method for solving hemivariational inequalities and study its convergence. Then we describe the hybrid method. The application of the hybrid method for solving the contact mechanics problems is discussed in Section \ref{contactmech}. Numerical results are presented in Section \ref{numeric}. Section \ref{concl} provides some concluding remarks.

\section{A subgradient method for hemivariational inequalities} \label{aggsub}
We consider the following minimization problem
\begin{align} \label{mainprob3}
  \begin{cases}
    \text{minimize}\quad  & \iL(\u)\\
    \text{subject to}     & \u \in \R^n,
  \end{cases}
\end{align}
where
\begin{equation} \label{objefun}
\iL(\u) = \frac{1}{2} \langle A\u, \u \rangle + \langle \bb, \u  \rangle + J(\u).
\end{equation}
Here $A \in \R^{n\times n}$ is a symmetric, positively defined matrix, $\bb \in \R^n$ is a vector and $J\colon \R^n \rightarrow \R$ is a locally Lipschitz function, in general, nonsmooth nonconvex.
In what follows we denote by $\mathbb{R}^n$ the $n$-dimensional Euclidean space, $\langle \u, \bw \rangle = \sum_{i=1}^n u_iw_i$ is the inner product of vectors $\u, \bv \in \mathbb{R}^n$ and $\|\u\| = \langle \u, \u \rangle^{1/2}$ is the associated norm. $B_\epsilon(\u) = \{\bw \in \mathbb{R}^n: \|\u-\bw\| < \epsilon\}$ is an open ball centered at $\u$ with the radius $\epsilon>0$. $S_1$ is the sphere of the unit ball in $\mathbb{R}^n$.

In addition, we assume that for any $\u \in \mathbb{R}^n$ and $\bd \in S_1$ function $\iL$ satisfies
\begin{equation} \label{secant}
    \iL(\u+\tau \bd) - \iL(\u) \leq \tau \langle \bv, \bd   \rangle, \quad \bv \in \partial \iL(\u+\tau \bd), \quad \tau > 0.
\end{equation}
\noindent
Here and below $\partial \iL(\bv)$ denotes the Clarke subdifferential of $\iL$ at point $\bv$ \cite{Panagiotopoulos1995}. Moreover, let us remark that the above assumption \eqref{secant} is satisfied when $J$ is for instance a difference-of-convex function \cite{Bagirov2021}.
The objective function $\iL$ in problem~\eqref{mainprob3} is represented as the sum of three terms. The third term in this representation is a nonconvex and nonsmooth function. This makes the problem nonconvex, having many local minimizers. Therefore, we propose an algorithm consisting of two phases. In the first phase, using the current starting point, we apply the aggregate subgradient method to find a local minimizer of this problem, and then we apply a special procedure to escape from this local minimizer and find a better starting point for the aggregate subgradient method. Different versions of the subgradient and aggregate subgradient methods can be found in \cite{Bagirov2013,BagKarMak2014,Bagirov2021}. 

The method proceeds as follows. First, we choose tolerances $\varepsilon > 0, ~\delta > 0$ and three constants $\gamma \in (0,1), c_1 \in (0, 1)$ and $c_2 \in (0, c_1)$.

\begin{algorithm} [H]
    \caption{\textbf{Algorithm 1:} Solving problem \eqref{mainprob3} for a given starting point $\u$} \label{alg:combined}
    \smallskip
 \begin{algorithmic}[1]
    \State\label{alg:dir:s0} [Initialization of outer iteration] Set $l \leftarrow 1$ and $\eta \leftarrow 1$.
    \State\label{alg:dir:s1} [Initialization of inner iteration] Set $k \leftarrow 1$, select any direction $\bd_k \in S_1$ and compute $\bv_k \in \partial \iL(\u+\eta \bd_k)$. Set $\widetilde{\bv}_k \leftarrow \bv_k$.
    \smallskip
    \State\label{alg:dir:s2} Solve the following problem
     $$
     \mbox{minimize}~\varphi_k(\lambda) \equiv\|\lambda \bv_k+(1-\lambda) \widetilde{\bv}_k\|^2~\mbox{subject to}~ \lambda \in [0,1].
     $$
     Let $\lambda_k$ be a solution to this problem. Set
     $$
       \bar{\bv}_k \leftarrow \lambda_k \bv_k + (1-\lambda_k) \widetilde{\bv}_k.
     $$
    \State\label{alg:dir:s3} [Switching phase] If
      \begin{equation} \label{alg:dir:stop1}
        \|\bar{\bv}_k\| \leq \delta
      \end{equation}
      then go to Step \ref{alg:dir:s7}.
    \smallskip
    \State\label{alg:dir:s4} Compute the search direction by ${\bd}_{k+1} \leftarrow -\|\bar{\bv}_k\|^{-1} \bar{\bv}_k$.
    \smallskip
    \State\label{alg:dir:s5} If
     \begin{equation} \label{alg:dir:stop2}
       \iL (\u + \sstep {\bd}_{k+1}) - \iL (\u) \leq - c_1 \sstep \|\bar{\bv}_k\|,
     \end{equation}
     then go to Step \ref{alg:dir:s8}.
    \smallskip
    \State\label{alg:dir:s6} Compute $\bv_{k+1} \in \partial \iL(\u+\sstep {\bd}_{k+1})$. Set $~\widetilde{\bv}_{k+1} \leftarrow \bar{\bv}_k$, $k \leftarrow k+1$ and go to Step \ref{alg:dir:s2}.
    \State\label{alg:dir:s8} Compute $\u \leftarrow \u+\sigma_l {\bd}_{k+1}$, where $\sigma_l$ is defined as follows
     $$
       \sigma_l = \max \Big\{\sigma \geq \eta: ~\iL(\u+\sigma {\bd}_{k+1}) -\iL(\u) \leq -c_2\sigma \|\bar{\bv}_k\| \Big\}.
     $$
     Set $l \leftarrow l+1$ and go to Step \ref{alg:dir:s1}.
    \State\label{alg:dir:s7} Set $\eta \leftarrow \gamma \eta$. If $\eta < \varepsilon$ then STOP. Otherwise go to Step \ref{alg:dir:s1}.
  \end{algorithmic}
\end{algorithm}

\medskip
Algorithm \ref{alg:combined} consists of two loops: inner and outer loops. For a given value of $\eta > 0$ the search directions are calculated in the inner loop (Steps 2-7). The new iteration is calculated and also the parameter $\eta$ is updated in the outer loop. First, we prove that for any fixed $\eta > 0$ the inner loop terminates after finite number of iterations.

\medskip

\begin{prop} Suppose that $\iL\colon \R^n\to\R$ is a locally Lipschitz function, $\u \in \R^n$, $\sstep > 0$ and the constant $C_1 < +\infty$ is such that
\begin{equation}
C_1 = \max \left\{\|\bv\|: \bv \in \partial \iL(\u+\sstep \bd), \bd \in S_1 \right\}. \label{bounded01}
\end{equation}
If $c_1 \in (0,1)$ and $\delta \in (0,C_1)$, then the inner loop in Algorithm \ref{alg:combined} terminates after finite many iterations $m > 0$, where
$$
m \leq 2 \log_2 (\delta/C_1)/ \log_2 C_2 + 1, ~~C_2 = 1 - [(1-c_1)(2C_1)^{-1} \delta]^2.
$$
\label{algorithmdescent}
\end{prop}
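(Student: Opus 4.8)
The plan is to show that every pass through the inner loop that does \emph{not} terminate is a null step (it reaches Step~\ref{alg:dir:s6}), that each null step contracts $\|\bar{\bv}_k\|^2$ by at least the factor $C_2$, and that the loop exits as soon as $\|\bar{\bv}_k\|\le\delta$; a geometric-decrease count then produces the stated bound on $m$.

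First I would extract the inequality characterising a null step. Reaching Step~\ref{alg:dir:s6} means both the stopping test \eqref{alg:dir:stop1} failed (so $\|\bar{\bv}_k\|>\delta$) and the descent test \eqref{alg:dir:stop2} failed, i.e. $\iL(\u+\sstep\bd_{k+1})-\iL(\u) > -c_1\sstep\|\bar{\bv}_k\|$. Since $\bd_{k+1}=-\|\bar{\bv}_k\|^{-1}\bar{\bv}_k\in S_1$ and $\bv_{k+1}\in\partial\iL(\u+\sstep\bd_{k+1})$, the secant assumption \eqref{secant} gives $\iL(\u+\sstep\bd_{k+1})-\iL(\u)\le\sstep\langle\bv_{k+1},\bd_{k+1}\rangle$. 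Combining these, dividing by $\sstep>0$ and substituting $\bd_{k+1}$ yields the key estimate
$$
\langle\bv_{k+1},\bar{\bv}_k\rangle < c_1\|\bar{\bv}_k\|^2,
\qquad\text{equivalently}\qquad
\langle\bar{\bv}_k,\bv_{k+1}-\bar{\bv}_k\rangle < -(1-c_1)\|\bar{\bv}_k\|^2 .
$$

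Next I would establish the uniform bound $\|\bv_k\|\le C_1$ and $\|\bar{\bv}_k\|\le C_1$ for all $k$: each $\bv_k$ is a subgradient at a point $\u+\sstep\bd$ with $\bd\in S_1$, hence bounded by $C_1$ in \eqref{bounded01}, while each $\bar{\bv}_k$ is a convex combination of such vectors (by induction, since $\widetilde{\bv}_{k+1}=\bar{\bv}_k$) and inherits the same bound. With this in hand I would analyse the aggregation in Step~\ref{alg:dir:s2} at index $k+1$, where $\varphi_{k+1}(\lambda)=\|\lambda\bv_{k+1}+(1-\lambda)\bar{\bv}_k\|^2$. Rather than locating the exact minimiser, I would upper-bound $\varphi_{k+1}(\lambda_{k+1})$ by evaluating $\varphi_{k+1}$ at the feasible test point $\bar\lambda=(1-c_1)\|\bar{\bv}_k\|^2/(4C_1^2)\in[0,1]$; using the key estimate for the linear term and $\|\bv_{k+1}-\bar{\bv}_k\|^2\le 4C_1^2$ for the quadratic term gives
$$
\|\bar{\bv}_{k+1}\|^2 \;\le\; \|\bar{\bv}_k\|^2-\frac{(1-c_1)^2\|\bar{\bv}_k\|^4}{4C_1^2}.
$$
Invoking $\|\bar{\bv}_k\|>\delta$ on a null step then delivers the contraction $\|\bar{\bv}_{k+1}\|^2\le C_2\|\bar{\bv}_k\|^2$ with $C_2=1-[(1-c_1)(2C_1)^{-1}\delta]^2\in(0,1)$.

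Finally I would iterate the contraction from $\|\bar{\bv}_1\|\le C_1$ to obtain $\|\bar{\bv}_k\|^2\le C_2^{\,k-1}C_1^2$, so the exit test $\|\bar{\bv}_k\|\le\delta$ is necessarily satisfied once $C_2^{\,k-1}C_1^2\le\delta^2$; solving this for $k$ and taking $\log_2$, while remembering that $\log_2 C_2<0$ reverses the inequality, gives $m\le 2\log_2(\delta/C_1)/\log_2 C_2+1$. I expect the main obstacle to be the aggregation step: the naive route splits into cases according to whether the unconstrained minimiser of $\varphi_{k+1}$ lands in $[0,1]$, which is awkward, whereas evaluating at the explicit feasible $\bar\lambda$ above and invoking optimality of $\lambda_{k+1}$ sidesteps the case analysis and is the cleanest path to exactly the stated constant $C_2$.
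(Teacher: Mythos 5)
Your proposal is correct and follows essentially the same route as the paper's proof: the same key inequality $\langle\bv_{k+1},\bar{\bv}_k\rangle < c_1\|\bar{\bv}_k\|^2$ derived from the secant assumption, the same evaluation of $\varphi_{k+1}$ at the explicit test point $\lambda=(1-c_1)\|\bar{\bv}_k\|^2/(2C_1)^2$ using $\|\bv_{k+1}-\bar{\bv}_k\|\le 2C_1$, the same per-step contraction by $C_2$, and the same geometric count. If anything, you are slightly more careful than the paper in verifying the test point lies in $[0,1]$ via the bound $\|\bar{\bv}_k\|\le C_1$, and you correctly omit the paper's (inessential) intermediate observation that $\bv_{k+1}$ is not a convex combination of $\bv_k$ and $\widetilde{\bv}_k$.
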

\begin{proof}
The inner loop in Algorithm \ref{alg:combined} will terminate the search for a descent direction only when either condition (\ref{alg:dir:stop1}) or (\ref{alg:dir:stop2}) is satisfied. To prove that the search for the descent direction concludes in at most \( m \) steps, it suffices to establish an upper bound on the number of steps where condition (\ref{alg:dir:stop1}) is satisfied.
The proof is conducted in three stages:
first, we demonstrate that each step identifies a new subgradient $\bv_{k+1}$ that is not a convex combination of \( \bv_{k} \) and \( \bar{\bv}_k \);
next, we show that this subgradient is the best fit in terms of minimizing the corresponding functional \( \varphi_k \), i.e., \( \varphi_{k+1}(\lambda_{k+1}) < \varphi_{k}(\lambda_{k}) \);
and finally, we establish that the norm of the aggregated subgradient becomes smaller than any fixed~\( \delta \) within a finite number of steps. Let us assume, therefore, that none of the stopping conditions are met; thus, we have
$$
\mathcal{L}\left(\u+\sstep \bd_{k+1}\right) - \mathcal{L}\left(\u\right) > -c_{1} \sstep\left\|\bar{\bv}_{k}\right\| .
$$
\noindent It follows from \eqref{secant} that
$$
\mathcal{L}\left(\u+\sstep \bd_{k+1}\right) - \mathcal{L}\left(\u\right) \leq \sstep\left\langle\bv_{k+1}, \bd_{k+1}\right\rangle,
$$
\noindent so we have
$$
-c_{1} \sstep\left\|\bar{\bv}_{k}\right\| < \sstep\left\langle\bv_{k+1}, \bd_{k+1}\right\rangle,
$$
\noindent which can be simplified by using definition of $\bd_{k+1}$ to

\begin{equation} \label{proof1:eq1}
\left\langle\bv_{k+1}, \bar{\bv}_{k}\right\rangle < c_{1} \left\|\bar{\bv}_{k}\right\|^2.
\end{equation}
\noindent Moreover, since $\bar{\bv}_{k}$ is the minimal value of $\varphi_k(\cdot)$ it can be shown that

\begin{equation} \label{proof1:eq2}
\left\langle\lambda \bv_k+(1-\lambda) \widetilde{\bv}_k, \bar{\bv}_{k}\right\rangle \geq c_{1} \left\|\bar{\bv}_{k}\right\|^2, \quad
\text{for all} \,\, \lambda \in [0, 1].
\end{equation}

\noindent Thus, based on equations \eqref{proof1:eq1} and \eqref{proof1:eq2}, we can deduce that
$$
\text{there is no }
\lambda \in [0, 1] \text{ such that } \bv_{k+1} = \lambda \bv_k+(1-\lambda) \widetilde{\bv}_k,
$$
\noindent which is what we aimed to demonstrate in the first step. Using the definition of $\bar\bv_{k+1}$ and the fact that $\lambda_{k+1}$ minimizes $\varphi_{k+1}(\cdot)$, we can write

\begin{align*}
\|\bar\bv_{k+1}\|^2
& =
\| \lambda_{k+1} \bv_{k+1} + (1 - \lambda_{k+1} ) \widetilde{\bv}_{k+1}\|^2
\\ & \leq
\| \lambda \bv_{k+1} + (1 - \lambda ) \widetilde{\bv}_{k+1}\|^2 \quad \forall \lambda \in [0, 1],
\end{align*}

\noindent which is equivalent to
$$
\|\bar\bv_{k+1}\|^2 \leq \lambda^2 \| \bv_{k+1} - \widetilde{\bv}_{k+1}\|^2 + \|\widetilde{\bv}_{k+1}\|^2 + 2 \lambda \left\langle \bv_{k+1} - \widetilde{\bv}_{k+1},  \widetilde{\bv}_{k+1}\right\rangle \quad \forall \lambda \in [0, 1].
$$

From the boundedness assumption (\ref{bounded01}), we know that $\| \bv_{k+1} - \widetilde{\bv}_{k+1}\| \leq 2 C_1$, and furthermore, using (\ref{proof1:eq1}) for $k > 1$, we obtain
$$
\left\langle \bv_{k+1} - \widetilde{\bv}_{k+1},  \widetilde{\bv}_{k+1}\right\rangle
=
\left\langle \bv_{k+1},  \bar\bv_{k}\right\rangle - \left\langle \bar\bv_{k},  \bar\bv_{k}\right\rangle
\leq
(c_{1} - 1) \left\|\bar{\bv}_{k}\right\|^2.$$
Thus, we derive
\begin{equation} \label{eq:vnorm}
\|\bar\bv_{k+1}\|^2
\leq
(2 \lambda C_1)^2 + (1 + 2 \lambda (c_{1} - 1) \left\|\bar{\bv}_{k}\right\|^2) \|\bar{\bv}_{k}\|^2 \quad \forall \lambda \in [0, 1].
\end{equation}
Choosing arbitrary $\lambda_0 \in [0, 1]$
$$
\lambda_0 = \frac{(1 - c_1) \|\bar{\bv}_{k}\|^2}{(2C_1)^2},
$$
and substituting it into inequality \eqref{eq:vnorm} we get
$$
\|\bar\bv_{k+1}\|^2
\leq
\left(1 - \left(\frac{(1 - c_1) \|\bar{\bv}_{k}\|}{2C_1}\right)^2\right) \|\bar{\bv}_{k}\|^2.
$$
From assumption that (\ref{alg:dir:stop1}) is not satisfied we know that $\delta < \|\bar{\bv}_{k}\|$. Denoting $C_2 = 1 - ((1 - c_1) \delta)^2 (2C_1)^{-2}$ and using the boundedness assumption (\ref{bounded01}) for $\|\bar{\bv}_{k}\|^2$ it follows
$$
\|\bar\bv_{k}\|^2
<
C_1^2 C_2^k.
$$
From above and the fact that $C_2 \in (0, 1)$ we deduce that (\ref{alg:dir:stop1}) is satisfied if
$$
\delta^2
\geq
C_1^2 C_2^k,
$$
what is true if $k = m$.
\end{proof}

For a given $\eta > 0$ consider the following convex hull of the set of subgradients
$$
W_\eta(\bu) = \conv \Big\{\bv \in \mathbb{R}^n:~~\bv \in \partial \iL(\bu+\eta \bd),~\bd \in S_1 \Big\}.
$$
Let $\delta > 0$ be given. A point $\bar{\bu}$ is called an $(\eta,\delta)$-stationary point of the function~$\iL$ iff
$$
\zero \in W_\eta(\bar{\bu}) + B_\delta(\zero).
$$

\begin{prop} Suppose that function $\iL$ is bounded below
\begin{equation}
\iL^*=\inf ~\{\iL(\u): \u \in \R^n\} > -\infty. \label{lowbound}
\end{equation}
Let $\u_0\in\R^n$ be an initial point. Then Algorithm \ref{alg:combined} terminates after finite many iterations $M > 0$ and produces ($\sstep,\delta$)-stationary point $\u_M$ where
\begin{equation} \label{upperbound}
M \leq M_0 \equiv \left\lfloor \frac{\iL(\u_0)-\iL^*}{c_2\eta\delta} \right\rfloor + 1.
\end{equation}
\label{hdelta1}
\end{prop}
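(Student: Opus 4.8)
The plan is to monitor the objective value $\iL$ along the sequence of points produced by the serious steps (Step~\ref{alg:dir:s8}) and to bound the number of such steps by a telescoping argument based on the sufficient-decrease rule, while Proposition~\ref{algorithmdescent} guarantees that each intervening inner loop is finite. I will let $\u_0, \u_1, \dots$ denote the iterates obtained after successive serious steps and take $M$ to be the number of serious steps performed.

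First I would check that the step size $\sigma_l$ of Step~\ref{alg:dir:s8} is well defined and satisfies $\sigma_l \geq \sstep$. A serious step is entered only from Step~\ref{alg:dir:s5}, where the test \eqref{alg:dir:stop2} holds; since $c_2 < c_1$, the choice $\sigma = \sstep$ already satisfies the defining inequality $\iL(\u + \sigma\bd_{k+1}) - \iL(\u) \leq -c_2\sigma\|\bar{\bv}_k\|$, so the feasible set for $\sigma_l$ is nonempty, and the lower bound \eqref{lowbound} forces $\sigma \leq (\iL(\u) - \iL^*)/(c_2\|\bar{\bv}_k\|)$, bounding it above. Hence the maximum exists and $\sigma_l \geq \sstep$.

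The quantitative heart is the observation that a serious step is reached only after the switching test \eqref{alg:dir:stop1} has \emph{failed}, so $\|\bar{\bv}_k\| > \delta$ at that moment. Together with $\sigma_l \geq \sstep$ this gives, at every serious step,
\[
\iL(\u_l) - \iL(\u_{l-1}) \leq -c_2\sigma_l\|\bar{\bv}_k\| \leq -c_2\,\sstep\,\delta .
\]
Telescoping over the $M$ serious steps and using $\iL(\u_M) \geq \iL^*$ yields $\iL(\u_0) - \iL^* \geq M\,c_2\sstep\delta$, which rearranges to the bound \eqref{upperbound}.

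It remains to assemble termination and stationarity. By Proposition~\ref{algorithmdescent} every inner loop ends after finitely many iterations through either \eqref{alg:dir:stop1} or \eqref{alg:dir:stop2}; the estimate above caps the number of exits of type \eqref{alg:dir:stop2}, and because $\gamma \in (0,1)$ the update $\sstep \leftarrow \gamma\sstep$ drives $\sstep$ below $\varepsilon$ after finitely many reductions, so the algorithm halts. When the inner loop exits through \eqref{alg:dir:stop1} (Step~\ref{alg:dir:s3}), the aggregate $\bar{\bv}_k$ is, by its recursive construction from subgradients evaluated at points $\u + \sstep\bd$ with $\bd \in S_1$, a convex combination of such subgradients, hence $\bar{\bv}_k \in W_{\sstep}(\u)$ with $\|\bar{\bv}_k\| \leq \delta$; this puts $\zero \in W_{\sstep}(\u) + B_\delta(\zero)$, so the returned point is $(\sstep,\delta)$-stationary. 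I expect the main obstacle to be the bookkeeping around $\sstep$: an induction on the inner index $k$ is needed to confirm that each $\widetilde{\bv}_k$ and $\bar{\bv}_k$ stays inside $W_{\sstep}(\u)$, and one must track that the per-step decrease $c_2\sstep\delta$ is evaluated with the value of $\sstep$ active at each serious step when reconciling it with the single $\sstep$ appearing in \eqref{upperbound}.
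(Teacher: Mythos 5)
Your proposal is correct and takes essentially the same approach as the paper: both arguments rest on the observation that every serious step decreases $\iL$ by at least $c_2\sstep\delta$ (because $\sigma_l \ge \sstep$ and the switching test \eqref{alg:dir:stop1} must have failed beforehand, so $\|\bar{\bv}_k\| > \delta$), which the lower bound \eqref{lowbound} converts into the bound \eqref{upperbound}; the paper packages this as a contradiction with an infinite sequence of non-stationary iterates, while you telescope directly. Your additional verifications --- the well-definedness of $\sigma_l$ and the induction showing $\bar{\bv}_k \in W_{\sstep}(\u)$, hence $(\sstep,\delta)$-stationarity upon exit through Step~\ref{alg:dir:s3} --- simply make explicit details that the paper's proof leaves implicit.
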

\begin{proof}
We conduct proof by contradiction. Let us assume that the sequence $\{\u_l\}$ generated by Algorithm \ref{alg:combined} is infinite and for any $l \in \mathbb{N}^+$ the point $\u_l$ is not $(\sstep, \delta)$-stationary point, i.e.
$$
\min \{\|\bv\| : \bv \in W_\sstep(\u_l)\} > \delta, \quad \forall l \in \mathbb{N}^+.
$$
Then we know that the descent direction ${d}_{k+1}$ can be found so that
$$
\iL(\u_l + \sstep {\bd}_{k+1}) - \iL (\u_l) < - c_1 \sstep \|\bar{\bv}_k\| \leq - c_2 \sstep \|\bar{\bv}_k\|
$$
It follows from the definition that $\sigma_l \geq \sstep$. Therefore, we get
$$
\iL(\u_{l+1}) - \iL(\u_l) = \iL(\u_l + \sstep {\bd}_{k+1}) - \iL (\u_l) < - c_2 \sigma_l \|\bar{\bv}_k\| \leq -c_2 \sstep \|\bar{\bv}_k\|,
$$
in addition the condition $\|\bar\bv_{k}\| > \delta$ is satisfied that implies
$$
\iL(\u_{l+1}) \leq \iL(\u_0) - (l + 1)c_2 \eta \delta.
$$
Therefore, $\iL(\u_l) \to -\infty$ as $l \to \infty$, which contradicts (\ref{lowbound}). Clearly, the upper bound for the number of iterations $M$ necessary to find the $(\sstep,\delta)$-stationary point is $M_0$ given by (\ref{upperbound}).
\end{proof}

To describe the hybrid algorithm we will use the Metropolis function

 $$   R(\bw,\u,T) = \min \Big\{1, \exp( {(} \iL(\u) - \iL(\bw) {)} /T) \Big\} $$

where $\u,\bw \in \mathbb{R}^n$ and $T >0$. Let $\e_i \in \mathbb{R}^n$ be the $i$-th standard unit vector.

\begin{algorithm}
    \caption{\textbf{Algorithm 2:} Hybrid subgradient and simulated annealing method for solving problem \eqref{mainprob3}} \label{alg:glb}
    \smallskip
 \begin{algorithmic}[1]
    \State\label{alg:hyb:s0} [Initialization] Select the initial point $\u_0 \in \mathbb{R}^n$, the initial temperature $T_0 \in (1, \infty)$, the minimum temperature $T_{min} < T_0$, the temperature reduction factor $\alpha \in (0,1)$. Set $\u_{best} \leftarrow \u_0, \iL_{best} \leftarrow \iL(\u_0),~\bar{\u} \leftarrow \u_0$ and $k \leftarrow 0$.
    \State\label{alg:hyb:s1} Apply Algorithm \ref{alg:combined} starting from the point $\bar{\u}$ and find the stationary point of problem \eqref{mainprob3}. Denote it by $\u_{k+1}$. If $\iL(\u_{k+1}) < \iL_{best}$ then update $\u_{best} \leftarrow \u_{k+1}, \iL_{best} \leftarrow \iL(\u_{k+1})$.
    \smallskip
    \State\label{alg:hyb:s2} Generate a uniformly distributed random number $\mu$ from $[0,1]$, 
    randomly select $i \in \{1,\ldots,n\}$ and calculate a~trial point $\bar{\bw} \leftarrow \u_{k+1}+\mu \e_i$.
    \smallskip
    \State\label{alg:hyb:s3} If $\iL(\bar{\bw}) < \iL_{best}$\,, then update $\u_{best} \leftarrow \bar{\bw}$, $\iL_{best} \leftarrow \iL(\bar{\bw})$, set $\bar{\u} \leftarrow \bar{\bw},~k \leftarrow k+1$ and go to Step \ref{alg:hyb:s1}.
    \smallskip
    \State\label{alg:hyb:s4} Sample a uniformly distributed random number $\beta$ from $[0,1]$. If $\beta \leq R(\bar{\bw}, \u_{k+1},T)$, then set $\bar{\u} \leftarrow \bar{\bw}$ and go to Step \ref{alg:hyb:s1}.
    \smallskip
    \State\label{alg:hyb:s5} Set $T\leftarrow\alpha T$. If $T < T_{min}$ then STOP. $\u_{best}$ is a solution. Otherwise go to Step \ref{alg:hyb:s2}.
  \end{algorithmic}
\end{algorithm}
\begin{remark}
    Algorithm \ref{alg:glb} is based on the combination of Algorithm \ref{alg:combined} and the si\-mu\-la\-ted annealing method. We apply Algorithm \ref{alg:combined} to find the stationary point of the function $\iL$ and then apply the simulated annealing method to escape from this point and find a new starting point for Algorithm \ref{alg:combined}. The convergence of the simulated annealing method is studied, for example, in \cite{Locatelli2000}. In the proposed hybrid algorithm, the simulated annealing method is only used to find starting points for the local search algorithm. This means that the hybrid method developed in this paper converges to the global minimizer of the function $\iL$ with probability one.
\end{remark}

\section{Contact Mechanics Problem} \label{contactmech}

In this section, we present an example of contact mechanics problem with the relevant physical context and notations.

An elastic body is considered to occupy a domain \(\Omega \subset \mathbb{R}^d\), where \(d = 2, 3\) in  applications.
The boundary \(\Gamma\) of the domain is divided into three measurable parts: \(\Gamma_D\), \(\Gamma_C\), and \(\Gamma_N\), with \(\Gamma_D\) having positive measure.
The boundary \(\Gamma\) is Lipschitz continuous ensuring the existence of the outward normal vector \(\bm{\nu}\) almost everywhere on \(\Gamma\). Boundary conditions specify that the body is clamped on \(\Gamma_D\) meaning the displacement satisfies \(\bu = \bm{0}\) there.
A surface force with density \(\bm{f}_N\) acts on \(\Gamma_N\), while a body force density \(\bm{f}_0\) is applied throughout \(\Omega\).
The contact interaction on \(\Gamma_C\) are govern by using general subdifferential inclusions. For the sake of simplicity we consider frictionless case.
The objective is to determine the displacement of the body in static equilibrium.

The scalar product and Euclidean norm in \(\mathbb{R}^d\) or \(\mathbb{S}^d\) (the space of symmetric second-order tensors) are denoted by ``\(\cdot\)'' and \(\|\cdot\|\), respectively.
The normal and tangential components of displacement \(\bu\) and stress \(\bm{\sigma}\) on \(\Gamma_C\) are represented by \(u_\nu = \bu\cdot\bm{\nu}\), \(\bu_\tau = \bu - u_\nu\bm{\nu}\), \(\sigma_\nu = \bm{\sigma}\bm{\nu}\cdot\bm{\nu}\), and \(\bm{\sigma}_\tau = \bm{\sigma}\bm{\nu} - \sigma_\nu\bm{\nu}\), respectively.
The small strain tensor is defined as \(\bm{\varepsilon}(\bu) = (\varepsilon_{ij}(\bu))\), where:
\(
\varepsilon_{ij}(\bu) = \frac{1}{2}(\frac{\partial u_i}{\partial x_j} + \frac{\partial u_j}{\partial x_i}).
\)

\

\noindent
\textbf{Problem $P$:} Find a displacement  \(\bu\colon \Omega \to \mathbb{R}^d\) and a stress \(\bm{\sigma}\colon \Omega \to \mathbb{S}^d\) satisfying
\begin{align}
  \bm{\sigma}  = \mathcal{A}(\bm{\varepsilon}(\bm{u})) \qquad &\textrm{ in } \Omega, \label{P1}\\
  \textrm{Div }\bm{\sigma} + \bm{f}_{0} = \bm{0}  \qquad &\textrm{ in } \Omega, \label{P2}\\
  \bm{u} = \bm{0}  \qquad &\textrm{ on } \Gamma_{D}, \label{P3}\\
  \bm{\sigma}\bm{\nu} = \bm{f}_{N} \qquad &\textrm{ on } \Gamma_{N}, \label{P4}\\
   -\sigma_{\nu} \in \partial j(u_{\nu}) \qquad &\textrm{ on } \Gamma_{C}, \label{P5}\\
   \bm{\sigma}_\tau = 0 \qquad &\textrm{ on } \Gamma_{C}. \label{P6}
\end{align}

\noindent
Here, equation~\eqref{P1} represents an elastic constitutive law and $\mathcal{A}$ is an elasticity operator.
Equilibrium equation~(\ref{P2}) reflects the fact that problem is static.
Equation~(\ref{P3}) represents clamped boundary condition on $\Gamma_{D}$ and~(\ref{P4}) represents the action of the traction on $\Gamma_{N}$.
Inclusion~(\ref{P5}) describes the response of the foundation in normal direction, where $j$ is a given potential.
Equation~(\ref{P6}) means that contact is frictionless.

The Hilbert spaces for the problem are
\[
\mathcal{H} = L^2(\Omega; \mathbb{S}^d), \quad V = \{\bm{v} \in H^1(\Omega)^d \mid \bm{v} = \bm{0} \text{ on } \Gamma_D\},
\]
the latter with norm defined through strain tensors and Korn's inequality ensuring completeness. The trace operator \(\gamma\colon V \to L^2(\Gamma_C)^d\) is continuous by the Sobolev trace theorem.

Using the Green formula and the definition of generalized subdifferential, a~weak formulation of Problem $P$ is derived as a hemivariational inequality.

\medskip

\noindent
\textbf{Problem $P_{hvi}$:}  Find \(\bu \in V\) such that
\[
(\mathcal{A}(\bm{\varepsilon}(\bm{u})),\bm{\varepsilon}(\bm{v}))_{\mathcal{H}} + \int_{\Gamma_C} j^0(u_\nu; v_\nu)\,da \geq \langle \bm{f}, \bm{v} \rangle_{V^*\times V}\quad \text{for all }\bm{v} \in V,
\]

\noindent
where for $\bm{f}_0 \in L^2(\Omega)^d, \quad \bm{f}_N \in L^2(\Gamma_N)^d$
\begin{align*}
 \langle \bm{f}, \bm{v} \rangle_{V^* \times V} = \int_{\Omega}\bm{f}_{0}\cdot \bm{v}\, dx + \int_{\Gamma_{N}}\bm{f}_{N}\cdot \gamma \bm{v}\, da. \nonumber
\end{align*}

\noindent
Below we present the rest of necessary assumptions for existence of the solution to Problem~$P_{hvi}$ (see more \cite{Ochal2021}).

\medskip
\noindent
The elasticity operator ${\mathcal{A}} \colon \Omega \times {\mathbb S}^d \to {\mathbb S}^d$ satisfies
\begin{enumerate}
  \item[(a)]
     $\mathcal{A}(\bm{x},\bm{\tau}) = (a_{ijkh}(\bm{x})\tau_{kh})$
     for all $\bm{\tau} \in {\mathbb S}^d$, a.e. $\bm{x}\in\Omega,\ a_{ijkh} \in L^{\infty}(\Omega),$
  \item[(b)]
    $\mathcal{A}(\bm{x},\bm{\tau}_1) \cdot \bm{\tau}_2 = \bm{\tau}_1 \cdot  \mathcal{A}(\bm{x},\bm{\tau}_2)$ for all $\bm{\tau}_1, \bm{\tau}_2 \in {\mathbb S}^d$, a.e. $\bm{x}\in\Omega$,
  \item[(c)]
     $\exists \, m_{\mathcal{A}}>0$ such that $\mathcal{A}(\bm{x},\bm{\tau}) \cdot \bm{\tau} \geq m_{\mathcal{A}} \|\bm{\tau}\|^2$ for all $\bm{\tau} \in {\mathbb S}^d$, a.e. $\bm{x}\in\Omega$.
\end{enumerate}

\noindent
The potential $j \colon \Gamma_C \times \mathbb{R} \to \mathbb{R}$ satisfies
\begin{enumerate}
  \item[(a)]
    $j(\cdot, \xi)$ is measurable on $\Gamma_C$ for all $\xi \in \mathbb{R}$ and there exists $e \in L^2(\Gamma_C)$ such that $j(\cdot,e(\cdot))\in L^1(\Gamma_C)$,
  \item[(b)]
    $j(\bm{x}, \cdot)$ is locally Lipschitz continuous on $\mathbb{R}$ for a.e. $\bm{x} \in \Gamma_C$,
  \item[(c)]
    there exist $c_{0}, c_{1} \geq 0$ such that \\[2mm]
    \hspace*{1cm}$|\partial j(\bm{x}, \xi)| \leq c_{0} + c_{1}|\xi|$\quad for all $\xi \in \mathbb{R}$, a.e. $\bm{x} \in \Gamma_C$,
  \item[(d)]
    there exists $\alpha \geq 0$ such that \\[2mm]
    \hspace*{1cm}$j^0(\bm{x},\xi_1;\xi_2-\xi_1) + j^0(\bm{x},\xi_2;\xi_1-\xi_2)\leq \alpha|\xi_1-\xi_2|^2$ \\[2mm]
     for all $\xi_1, \xi_2 \in \mathbb{R}$, a.e. $\bm{x} \in \Gamma_C$.
\end{enumerate}

\noindent
We define the functional $J \colon L^2(\Gamma_C)^d \to \mathbb{R}$ by
\begin{align*}
J(\bm{v}) =  \int_{\Gamma_C} j(v_\nu)\, da\quad \text{for all } \bm{v} \in L^2(\Gamma_C)^d.\label{J}
\end{align*}

To find a solution of the discrete version of hemivariational inequality (Problem~$P_{hvi}$) we use an optimization-based method described in detail in \cite{Ochal2021}. Let $V^h \subset V$ be a finite dimensional subspace with a discretization parameter $h>0$.
The corresponding optimization problem in the case of Problem~$P_{hvi}$ is as follows.

\medskip
\noindent
\textbf{Problem ${P_{opt}^{h}}$:} {Find  $\bm{u}^h \in V^h$ that minimizes functional ${\mathfrak{L}}: V \rightarrow \mathbb{R}$ defined by
\begin{align*}
  {\mathfrak{L}}(\bm{v}) = \frac{1}{2} (\mathcal{A}(\bm{\varepsilon}(\bm{v})),\bm{\varepsilon}(\bm{v}))_{\mathcal{H}}  - \langle  \bm{f} ,  \bm{v}  \rangle_{V^*\times V} + J(\gamma  \bm{v}  )\quad \text{for all }\bm{v} \in V.
\end{align*}
}

To numerically solve the above optimization problem, we use the finite element method. We discretize the elasticity operator, the forces \( \bm{f} \), and the contact condition, which allows us to construct the functional in the form (\ref{objefun}).

\section{Numerical results} \label{numeric}

To verify the accuracy and efficiency of the proposed method, we conducted a~series of simulations.
To keep this work concise, we focus on the simplified model introduced in the previous section.
The schematic representation of the modeled 2D beam is shown in Figure~\ref{fig:ref}.
The beam thickness is 10\,mm, length 210\,mm and it is clamped at both ends and rests directly on a soft obstacle made of a~composite material.
A force is applied to the beam from above:

\[
\bm{f}_{N}(x) =
\left( 0, - L \, S  \left( 1 - \frac{(x - 105\,\text{mm})^2}{(105\,\text{mm})^2} \right) \right),
\]

\noindent
where beam upper surface $S = 1000\,\text{mm}^2$ and $L$ is applied loading.

\begin{figure}
 \includegraphics[width=\textwidth]{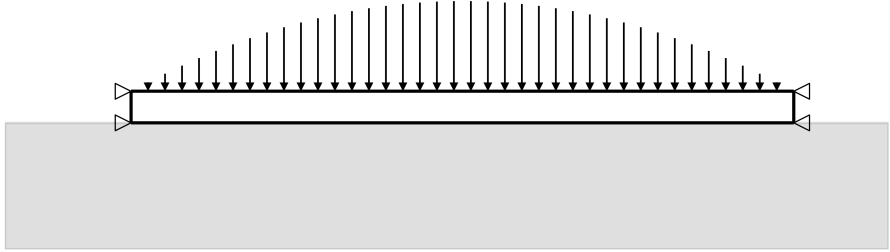}
 \caption{Modeled beam under loading.}\label{fig:ref}
\end{figure}

As the beam deflects under the applied force and penetrates the obstacle.
The composite nature of the foundation is characterized by the subgradient of the functional \( j_n \). Examples of the functional \( j_n \) for $n=2$ (black) and $n=7$  (gray) are illustrated in Figure~\ref{fig:func}.
For more examples of composite materials, see e.g. \cite{Tair18, Tair19}.
The presented contact law corresponds to a material whose reaction force increases with penetration until a critical point is reached, at which a composite layer cracks, causing a reduction in the foundation's reaction force.

The function \( j_2 \) represents two composite layers represents a soft base covered by a thin (3\,mm) protective layer, which is responsible for the initial increase in reaction force with penetration.
Once the critical penetration threshold is exceeded, the protective layer cracks, and the reaction force ceases to increase, stabilizing at a constant level.
Similarly, in the function \( j_7 \) represents seven layers, where are six progressively thicker protective layers, each contributing to the force response until crack occurs.
The total thickness of all protective layers is constant and equal to 3\,mm.
For any $n>2$ function $j_n$ is nondifferentiable and nonconvex.

\begin{figure}
 \includegraphics[width=\textwidth]{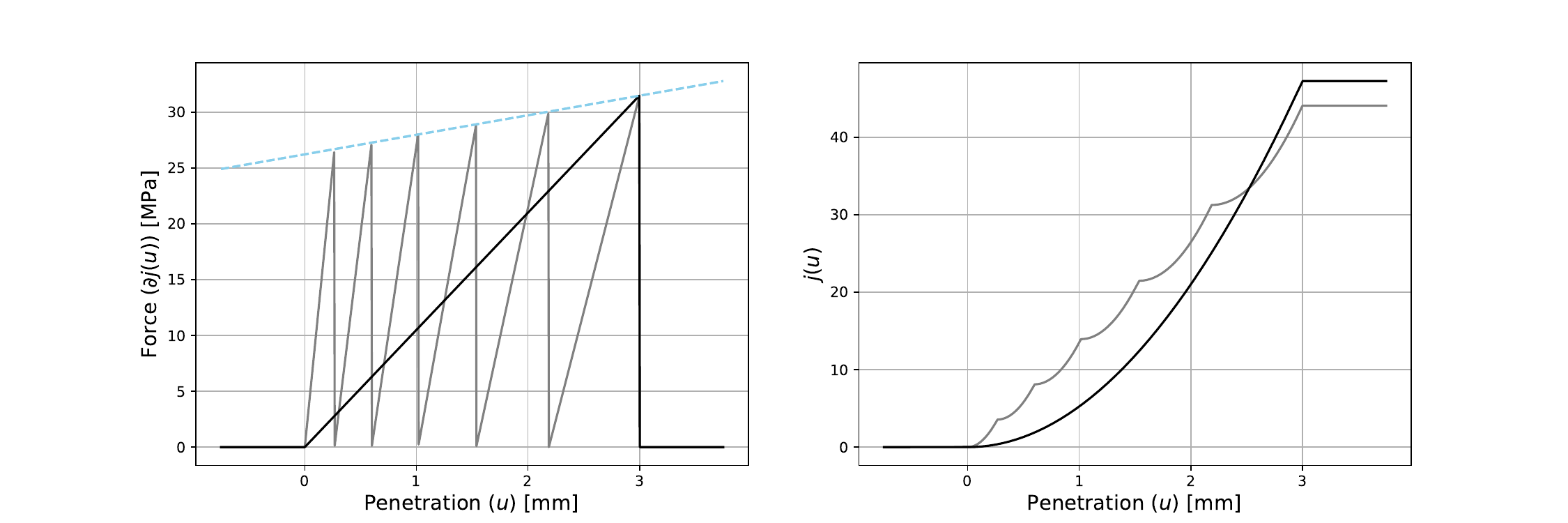}
 \caption{A nonmonotone contact law and the corresponding functional \( j(u) \).}\label{fig:func}
\end{figure}

For a sake of simplicity, we neglect the body internal forces, i.e.,  $\bm{f}_0=\bm{0}$.
Elasticity tensors are given by
\begin{eqnarray*}
({\mathcal A}\bm{\tau})_{ij}& = &\frac{E \kappa}{(1 + \kappa)(1 - 2\kappa)}(\tau_{11} + \tau_{22})\delta_{ij} \, + \, \frac{E}{1 + \kappa}\tau_{ij} \label{elastic_tensor}\\
&&\qquad\qquad\qquad\qquad\forall
\, \bm{\tau}=(\tau_{ij}) \in \mathbb{S}^2, \ i,j = 1, 2  \nonumber. \label{relaxation_tensor}
\end{eqnarray*}
\noindent
Here and below $\delta_{ij}$ is the Kronecker delta, and $E$ and $\kappa$ are Young's modulus and Poisson's ratio of the body material, respectively.
We chose $E = 9.646 \cdot 10^7 \, \text{MPa}$ and $\kappa = 0.4 $.

\begin{figure}
 \includegraphics[width=0.5\textwidth]{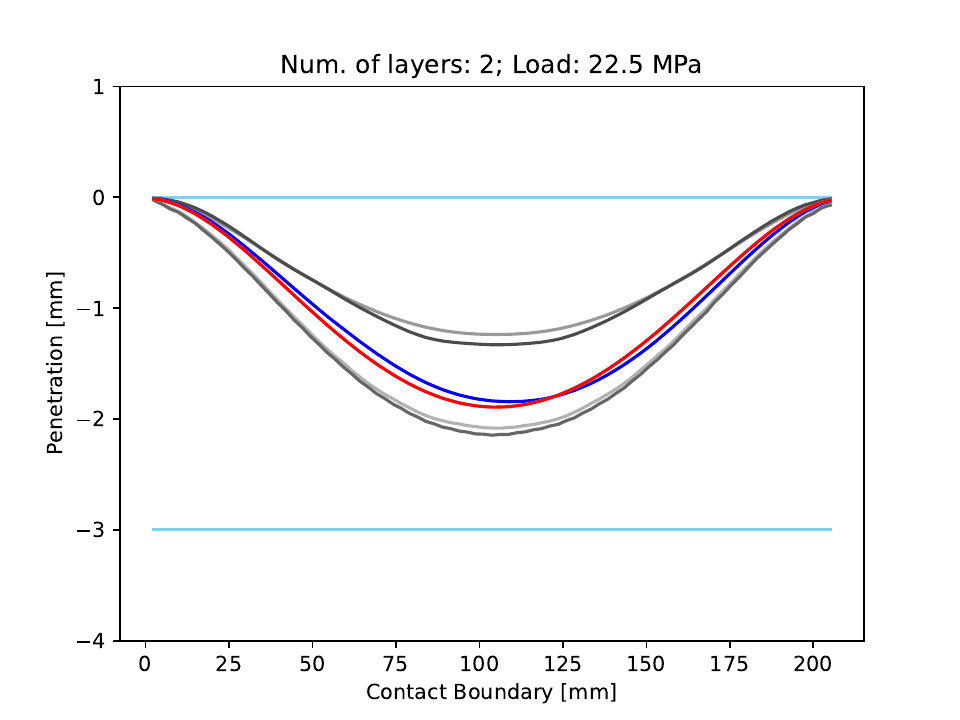}
 \includegraphics[width=0.5\textwidth]{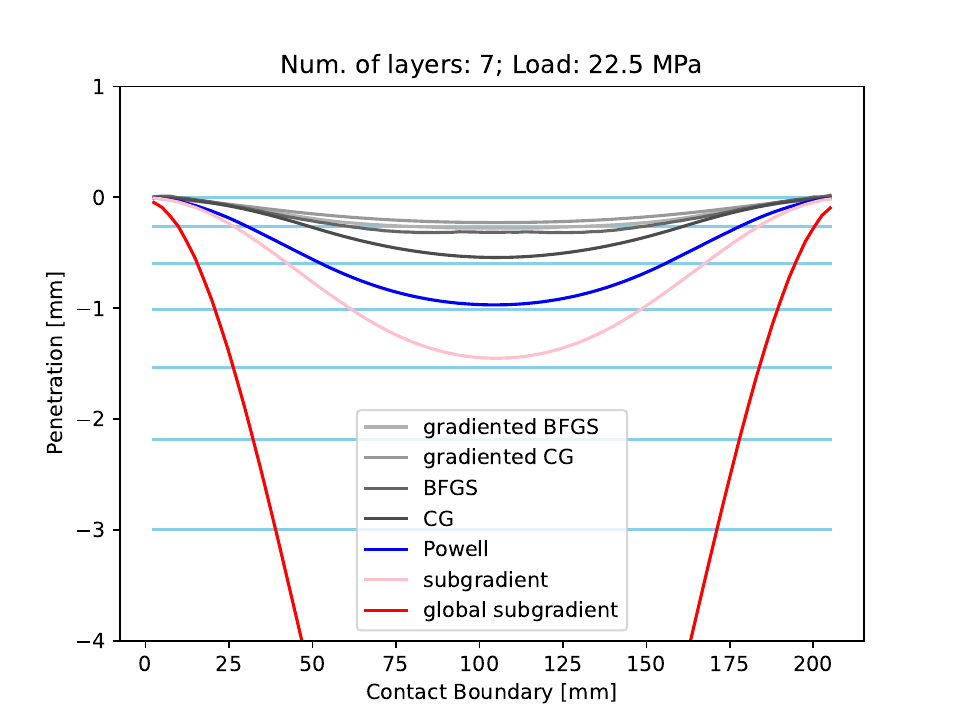}
 \caption{Deflection of the beam under loading.}\label{fig:deflect}
\end{figure}

We present the results of numerical experiments conducted for the contact law based on the functions \( j_2 \), \( j_3 \), \( j_7 \), and \( j_{10} \). For each case, we performed 10 simulations under varying load conditions. The optimization problem in each simulation was solved using six different methods which was: four optimization techniques from the \texttt{numpy} package \cite{NUMPY} and two proposed in the following paper. A mesh with a finite element size not exceeding 1.75\,mm was used.

Figure \ref{fig:deflect} highlights the increased differences between methods observed in multi-layered materials, further illustrating the complexities introduced by additional layers in the composite structure.
In the left plot, we see the deformation of the body in the case of two layers (protective and base), while the right plot shows the deformation for seven layers.

\begin{figure}[p]
 \centering
 \includegraphics[width=0.875\textwidth]{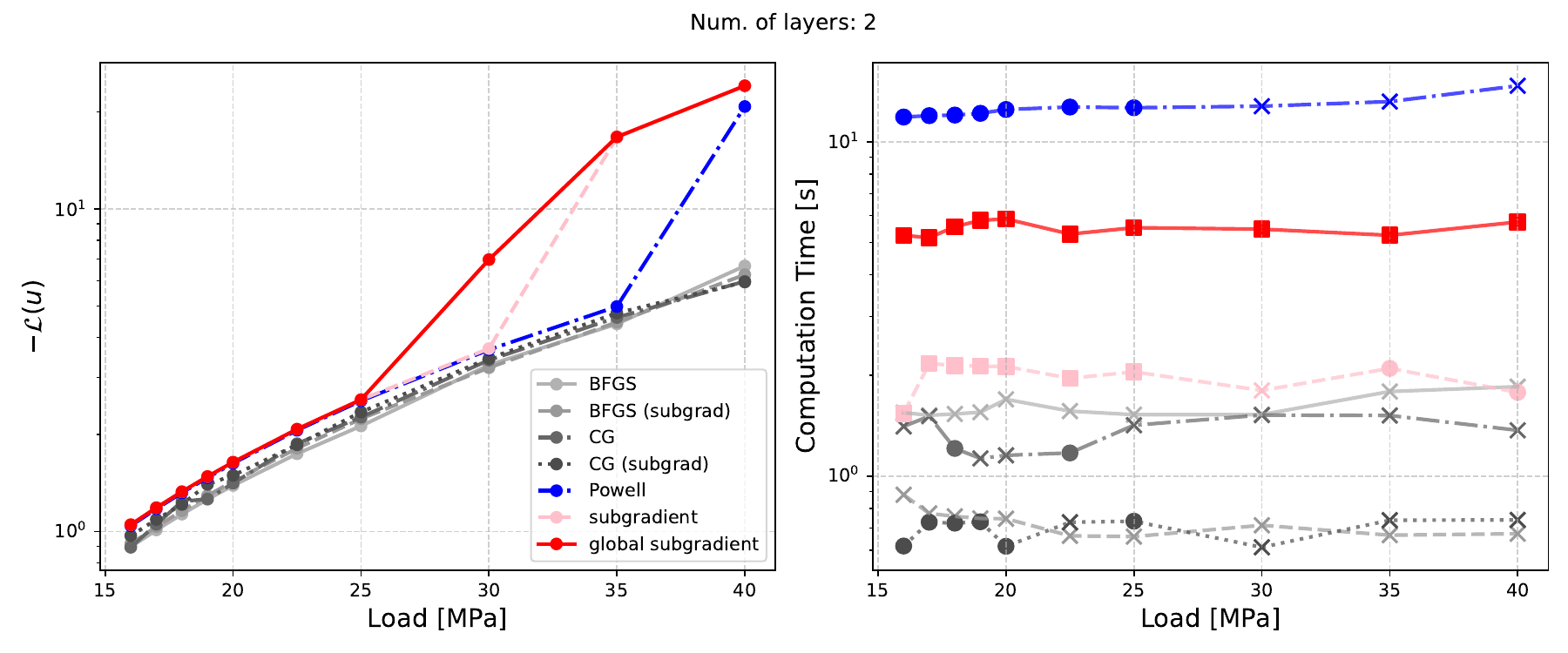}
 \includegraphics[width=0.875\textwidth]{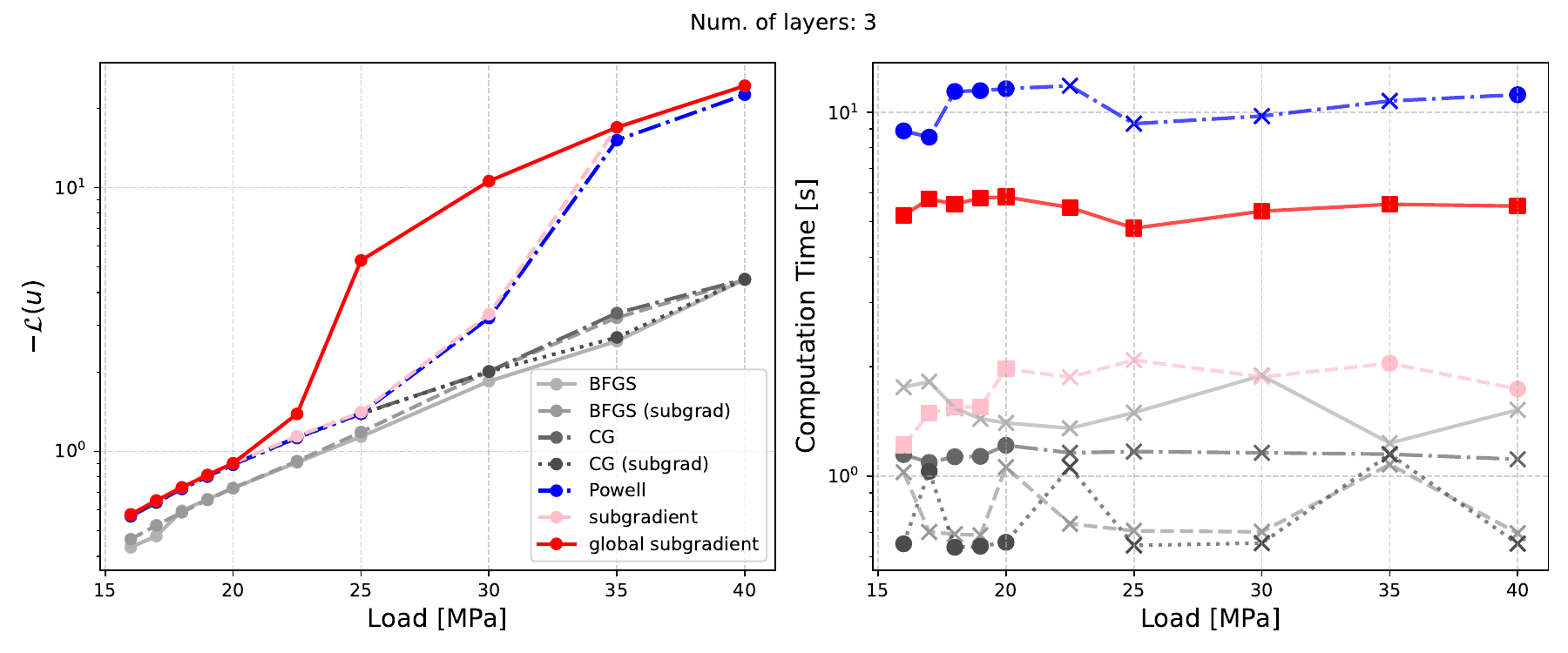}
 \includegraphics[width=0.875\textwidth]{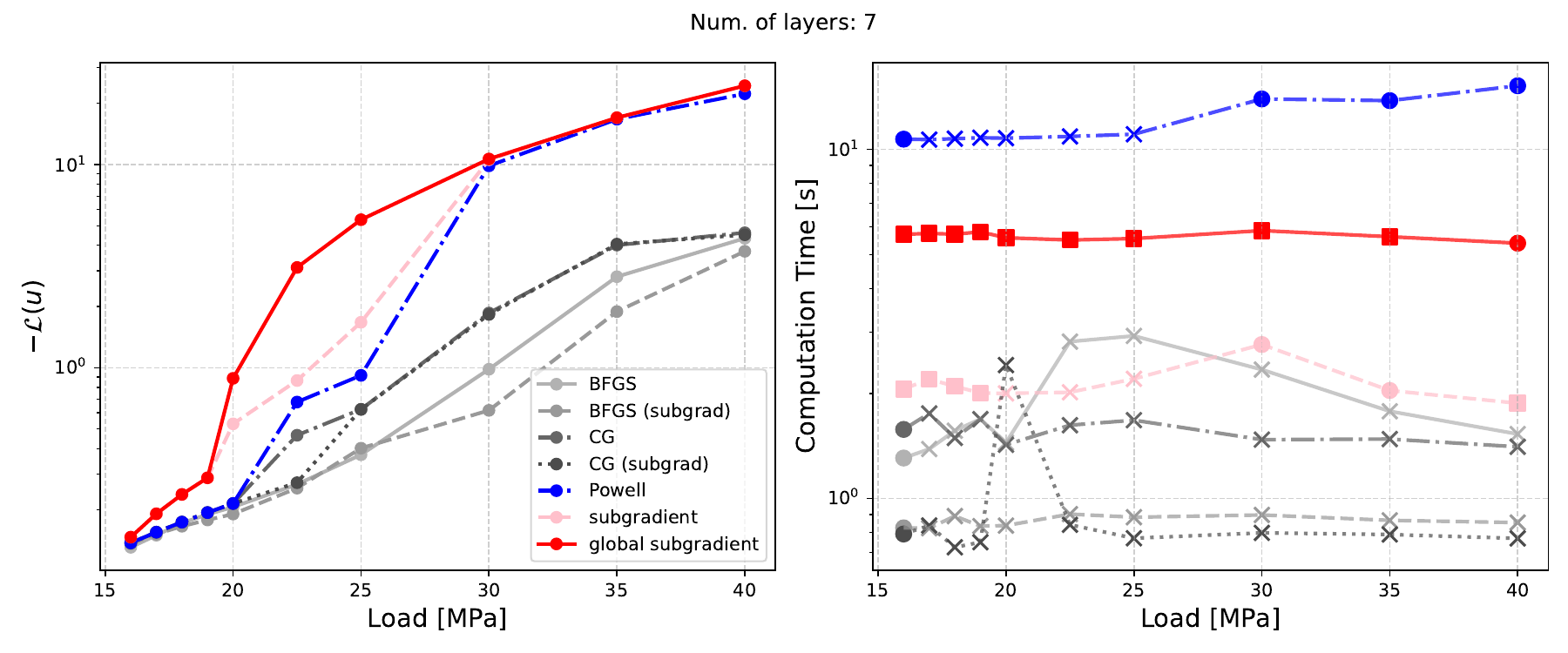}
 \includegraphics[width=0.875\textwidth]{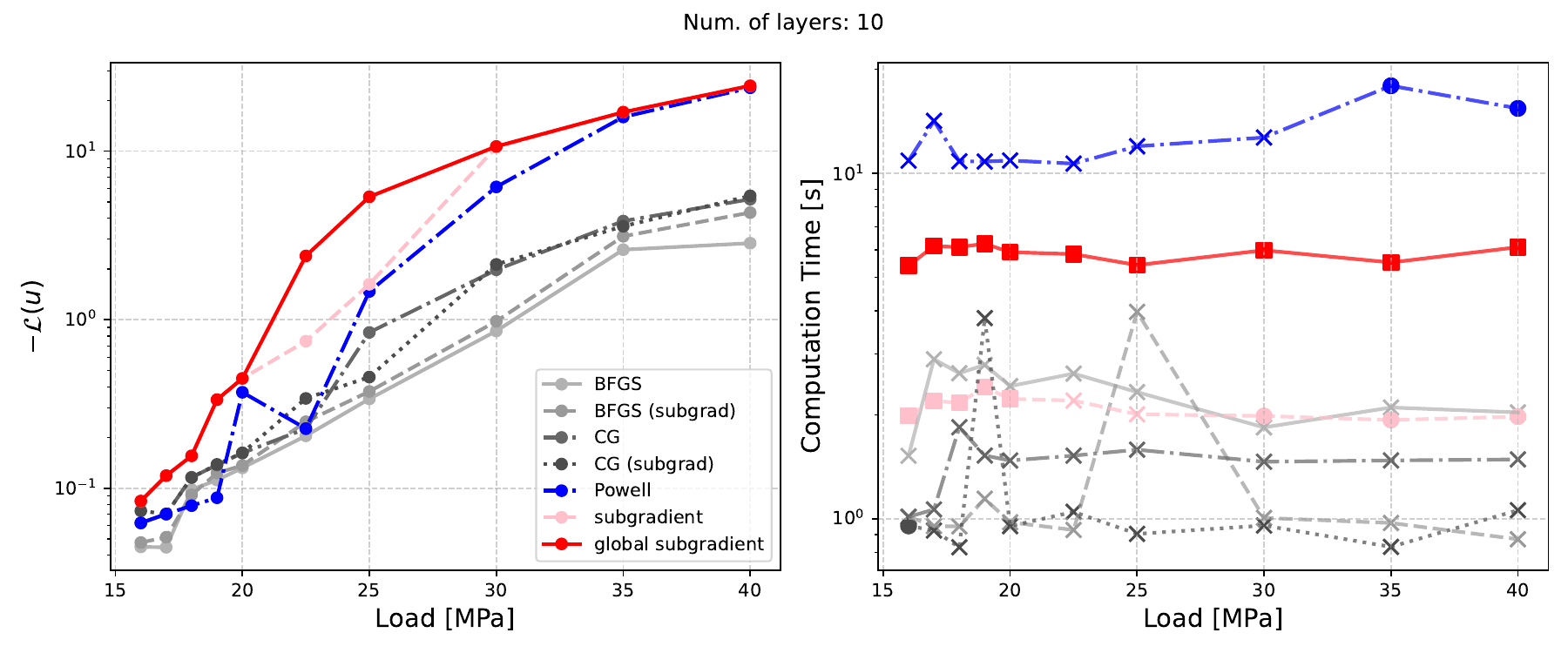}
 \caption{Energy function values for selected simulations (left column) and the corresponding computation time (right column).}\label{fig:perf}
\end{figure}

The methods - BFGS, Conjugate Gradient Method (CG), ``gradiented BFGS'', and ``gradiented Conjugate Gradient Method'' (``gradiented CG'') - exhibited short optimization times but failed to yield satisfactory results. The prefix ``gradiented'' indicates that the method explicitly utilized the computed subgradient of the cost function, as opposed to its non-prefixed counterparts, which estimated the gradient. These methods are marked in shades of gray in the presented plots.

A more notable approach is Powell's method, which demonstrated significantly better results. Although Powell's method generally exhibits high precision, it is computationally expensive. Moreover, as illustrated in the left column of Figure \ref{fig:perf}, it does not always find the optimal solution. Notably, its performance fluctuates around intermediate force values, which correspond to the progressive rupture of individual material layers - an inherently nonlinear phenomenon.

Our proposed methods, denoted as ``subgradient'' and ``global subgradient'', perform notably better when dealing with a larger number of composite material layers. Due to the possibility of multiple local minima, the basic subgradient method may occasionally become trapped. To mitigate this issue, we developed a strategy for selecting promising initial points for the subgradient method. Specifically, we employed a search algorithm that identified five starting points for each optimization run.

Note that in Figure \ref{fig:perf} the $y$-axis of the plots employs a logarithmic scale, and the cost function values are presented with a negative sign to enhance readability - thus, higher values indicate better results. The right column of Figure \ref{fig:perf} presents the computational time required for optimization. The markers distinguish three categories of results: squares indicate the lowest achieved cost function value $\iL_\text{best}$, circles represent values within section $(0.9 \, \iL_\text{best} + 0.01, \iL_\text{best})$, and ``x'' denotes results that deviate more significantly from $\iL_\text{best}$. These plots reveal that the ``global subgradient'' method outperforms the others in finding the global minimum while also being computationally more efficient than Powell’s method.

It is worth emphasizing that subgradient and global subgradient methods were implemented in Python.
We employ our original open-source package \textit{conmech} \cite{CONMECH}, a user-friendly Pythonic tool designed for contact mechanical si\-mu\-la\-tions.
The package is a comprehensive simulation framework, allowing for straightforward definition of body geometry and material properties, automatic generation of computational meshes, and empirical error analysis. It supports simulations for static, quasistatic, and dynamic problems in both two and three dimensions.
The software is designed with modularity in mind, enabling seamless extension of existing models with additional physical effects.
To improve computational robustness in \textit{conmech} we utilize the just-in-time compiler \textit{Numba} \cite{NUMBA}.
Wall times for CPU depicted in right column of Figure \ref{fig:perf} are obtained on commodity hardware: MacBook Pro M1 16GB.

\section{Conclusion} \label{concl}
In this paper, we developed a method for the numerical solution of hemiva\-ria\-tional inequalities. The hemivarional inequalities problem is reduced to the minimization of the so-called energy function. This function is both nonsmooth and nonconvex. The proposed method is a hybrid of the subgradient and the si\-mu\-la\-ted annealing methods. The subgradient is applied to find stationary points of nonsmooth energy function and the simulated annealing is used to escape from these stationary points and find better starting points for the subgradient method. In this way the proposed hybrid method can efficiently deal with both nonsmoothness and nonconvexity of the problem under consideration.

\begin{credits}
\subsubsection{\ackname} This project has received funding from the European Union’s Horizon 2020 Research and Innovation Programme under the Marie Sklodowska-Curie
Grant Agreement No 823731 CONMECH. The first and third authors are supported by National Science Center, Poland, under project OPUS no. 2021/41/B/ST1/01636.

\subsubsection{\discintname}
The authors have no competing interests to declare that are relevant to the content of this article.
\end{credits}

\bibliographystyle{splncs04}
\bibliography{Hemivarbib}

\begin{thebibliography}{10}
\providecommand{\url}[1]{\texttt{#1}}
\providecommand{\urlprefix}{URL }
\providecommand{\doi}[1]{https://doi.org/#1}

\bibitem{BagKarMak2014}
Bagirov, A., Karmitsa, N., M\"akel\"a, M.: Introduction to Nonsmooth Optimization. Berlin/New York: Springer Verlag (2014). \doi{10.1007/978-3-319-08114-4}

\bibitem{Bagirov2013}
Bagirov, A., Jin, L., Karmitsa, N., Al~Nuaimat, A., Sultanova, N.: Subgradient method for nonconvex nonsmooth optimization. J. Optim. Theory and Appl.  \textbf{157},  416--435 (2013). \doi{10.1007/s10957-012-0167-6}

\bibitem{Bagirov2021}
Bagirov, A., Taheri, S., Joki, K., Karmitsa, N., M\"akel\"a, M.: Aggregate subgradient method for nonsmooth {DC} optimization. Optimization Letters  \textbf{15},  83--96 (2021). \doi{10.1007/s11590-020-01586-z}

\bibitem{Han2019}
Han, W., Sofonea, M.: Numerical analysis of hemivariational inequalities in contact mechanics. Acta Numerica  \textbf{28},  175--286 (2019). \doi{10.1017/S0962492919000023}

\bibitem{Han2022}
Han, W., Sofonea, M.: Numerical analysis of a general elliptic variational-hemivariational inequality. Journal of Nonlinear and Variational Analysis  \textbf{6},  517--534 (2022). \doi{10.23952/jnva.6.2022.5.06}

\bibitem{NUMPY}
Harris, C.R., Millman, K.J., van~der Walt, S.J., Gommers, R., Virtanen, P., Cournapeau, D., Wieser, E., Taylor, J., Berg, S., Smith, N.J., Kern, R., Picus, M., Hoyer, S., van Kerkwijk, M.H., Brett, M., Haldane, A., Fern{'{a}}ndez~del R{'{i}}o, J., Wiebe, M., Peterson, P., G{'{e}}rard-Marchant, P., Sheppard, K., Reddy, T., Weckesser, W., Abbasi, H., Gohlke, C., Oliphant, T.E.: Array programming with {NumPy}. Nature  \textbf{585}(7825),  357--362 (Sep 2020). \doi{10.1038/s41586-020-2649-2}

\bibitem{Ochal2021}
Jureczka, M., Ochal, A.: A nonsmooth optimization approach for hemivariational inequalities with applications to contact mechanics. Applied Mathematics \& Optimization  \textbf{83},  1465--1485 (2021). \doi{10.1007/s00245-019-09593-y}

\bibitem{NUMBA}
Lam, S., Pitrou, A., Seibert, S.: Numba: a {LLVM}-based {P}ython {JIT} compiler. In: Proceedings of the Second Workshop on the {LLVM} Compiler Infrastructure in HPC. LLVM ’15, ACM (2015). \doi{10.1145/2833157.2833162}

\bibitem{Locatelli2000}
Locatelli, M.: Simulated annealing algorithms for continuous global optimization: convergence conditions. J. Optim. Theory and Appl.  \textbf{104},  121--133 (2000). \doi{10.1023/A:1004680806815}

\bibitem{Makela99}
M\"akel\"a, M., Miettinen, M., Luk\v{s}an, L., Vl\v{c}ek, J.: Comparing nonsmooth nonconvex bundle methods in solving hemivariational inequalities. Journal of Global Optimization  \textbf{14},  117--135 (1999). \doi{10.1023/A:1008282922372}

\bibitem{Panagiotopoulos1995}
Naniewicz, Z., Panagiotopoulos, P.: Mathematical Theory of Hemivariational Inequalities and Applications. Basel/New York: Marcel Dekker (1995). \doi{10.1002/zamm.19960760104}

\bibitem{CONMECH}
Ochal, A., Jureczka, M., Bartman, P.: A survey of numerical methods for hemivariational inequalities with applications to contact mechanics. Communications in Nonlinear Science and Numerical Simulation  \textbf{114},  106563 (2022). \doi{10.1016/j.cnsns.2022.106563}

\bibitem{Panagiotopoulos1993}
Panagiotopoulos, P.: Hemivariational Inequalities. Berlin/New York: Springer Verlag (1993)

\bibitem{Tair18}
Tairidis, G., Foutsitzi, G., Stavroulakis, G.E.: A Multi-layer Piezocomposite Model and Application on Controlled Smart Structures, pp. 365--385. Springer International Publishing, Cham (2018). \doi{10.1007/978-3-319-70563-7_17}

\bibitem{Tair19}
Tairidis, G.K., Foutsitzi, G., Stavroulakis, G.E.: Optimal Design of Smart Composites, pp. 185--217. Springer International Publishing, Cham (2019). \doi{10.1007/978-3-030-12767-1_10}

\bibitem{Wang21}
Wang, F., Wu, B., Han, W.: The virtual element method for general elliptic hemivariational inequalities. Journal of Computational and Applied Mathematics  \textbf{389},  113330 (2021). \doi{10.1016/j.cam.2020.113330}

\end{thebibliography}

\end{document}